\newtheorem{thm}{Theorem}
\newtheorem{lem}{Lemma}
\theoremstyle{definition}
\newtheorem{defn}{Definition}
\newtheorem{rem}{Remark}
\newtheorem{cor}{Corollary}
\def\bea{\begin{eqnarray}}
\def\eea{\end{eqnarray}}
\begin{document}

\title[A two-vertex theorem for normal tilings]{A two-vertex theorem for normal tilings}
\author[G. Domokos, \'A.G. Hor\'ath and K. Reg\H os ] {G\'abor Domokos, \'Akos G. Horv\'ath and Krisztina Reg\H os}
\address{G\'abor Domokos, MTA-BME Morphodynamics Research Group and Dept. of Mechanics, Materials and Structures, Budapest University of Technology and Economics,
M\H uegyetem rakpart 1-3., Budapest, Hungary, 1111}
\email{domokos@iit.bme.hu}
\address{\'Akos G. Horv\'ath, Dept. of Geometry, Budapest University of Technology and Economics,
Egry J\'ozsef utca 1., Budapest, Hungary, 1111}
\email{ghorvathakos@gmail.com}
\address{Krisztina Reg\H os,MTA-BME Morphodynamics Research Group, Budapest University of Technology and Economics,
M\H uegyetem rakpart 1-3., Budapest, Hungary, 1111}
\email{regoskriszti@gmail.com}
\thanks{Support of the NKFIH Hungarian Research Fund grant 134199 and of the NKFIH Fund TKP2021 BME-NVA, carried out at the Budapest University of Technology and Economics, is kindly acknowledged.
}
\subjclass[2010]{52C20}

\keywords{normal tiling, sharp vertex}

\maketitle

\begin{abstract}
We regard a smooth, $d=2$-dimensional manifold $\mathcal{M}$ and its normal tiling $M$, the cells of  which
may have non-smooth or smooth vertices (at the latter, two edges meet at 180 degrees.)
We denote the average number (per cell) of non-smooth vertices by $\bar v^{\star}$
and we  prove that  if $M$ is periodic then  $\bar v^{\star} \geq 2$. We show the same result
for the monohedral case by an entirely different argument. Our theory also makes a closely related prediction
for non-periodic tilings. In 3 dimensions we show a monohedral construction with $\bar v^{\star}=0$.
\end{abstract}

\section{Introduction}
Both in nature and among man-made structures we often encounter cellular patterns where a domain is decomposed into mutually disjoint, non-overlapping cells. Such patterns, also called \emph{tessellations} or \emph{tilings,} arise in geology (as fracture patterns \cite{adler_fracture_network_book, Plato, goering_evolv_pattern_2013, goering_columnar_2008,  steacy_automaton_1991}), in engineering  and architecture (as surface patterns or as urban structures \cite{urban1, arch1}), in geography  \cite{map1} or in biology (as cell tissues \cite{cell1, cell2}).

In general, we will assume that the boundaries of cells are piecewise $C^1$-smooth, however, in subsection \ref{ss:monohedral} we will require piecewise $C^2$-smoothness. 
We also assume that the tiling is \emph{normal}\cite{Grunbaum-Shepard}, i.e. that every cell is topologically equivalent to a disk, the intersection of any two cells is a connected set or the empty set,  all cells are uniformly bounded. In addition,  for inifinite tilings we will assume that they are also  \emph{balanced} \cite{Grunbaum-Shepard}, i.e. that the average number of vertices and edges exist independently from the method of the counting, the detailed description of the latter process is given in  Remark \ref{rem1}. Normal tilings have been described in various settings \cite{Horne, Kazanci, Senechal}, however, the emphasis was on their combinatorial properties. Here we aim to describe a curios \emph{metric} aspect of these patterns. 
Cells may have non-smooth or smooth vertices (at the latter, two edges meet at 180 degrees). 
We denote the average number (per cell) of the former by $\bar v^{\star}$ and we 
prove that for periodic patterns $\bar v^{\star} \geq 2$ (see Theorem \ref{main} and Remark \ref{periodic}).
In Theorem \ref{main} we also provide a general formula for the lower bound on $\bar v^{\star}$, including non-periodic patterns on arbitrary, smooth, 2D manifolds. In addition, we prove the $\bar v ^{\star} \geq 2$ statement for monohedral tilings (consisting of congruent cells)
based on a different argument. Finally, we offer a 3-dimensional monohedral construction with $v^{\star}=0$.

\subsection{A simple example and the main result}
We start by giving an informal example, illustrating the basic concepts, based on which we can formulate our main result
and we will provide more formal definitions later.
We regard normal tilings
of smooth, 2D manifolds. 
Figure \ref{fig:0} illustrates three such patterns (illustrated in columns (A),(B),(C), respectively) , all of which  are periodic. In the bottom row we indicate some  real-world examples which could be approximated by these patterns.  We will characterize such patterns by the \emph{combinatorial nodal degree $\bar n$ and combinatorial cell degree $\bar v$} which count the average number of cells meeting at one node and the average number of nodes along the boundary of one cell.   We could also interpret these degrees as counting the number of vertices meeting at a node and counting the number of vertices along the perimeter of a cell. In case of periodic patterns, it is easy to compute these averages; for the particular patterns illustrated in Fig \ref{fig:0} all nodes and cells are identical and we can see that the combinatorial degrees, illustrated and counted  in the upper row, are identical for all three patterns.
\begin{figure}[ht]
\begin{center}
\includegraphics[width=\textwidth]{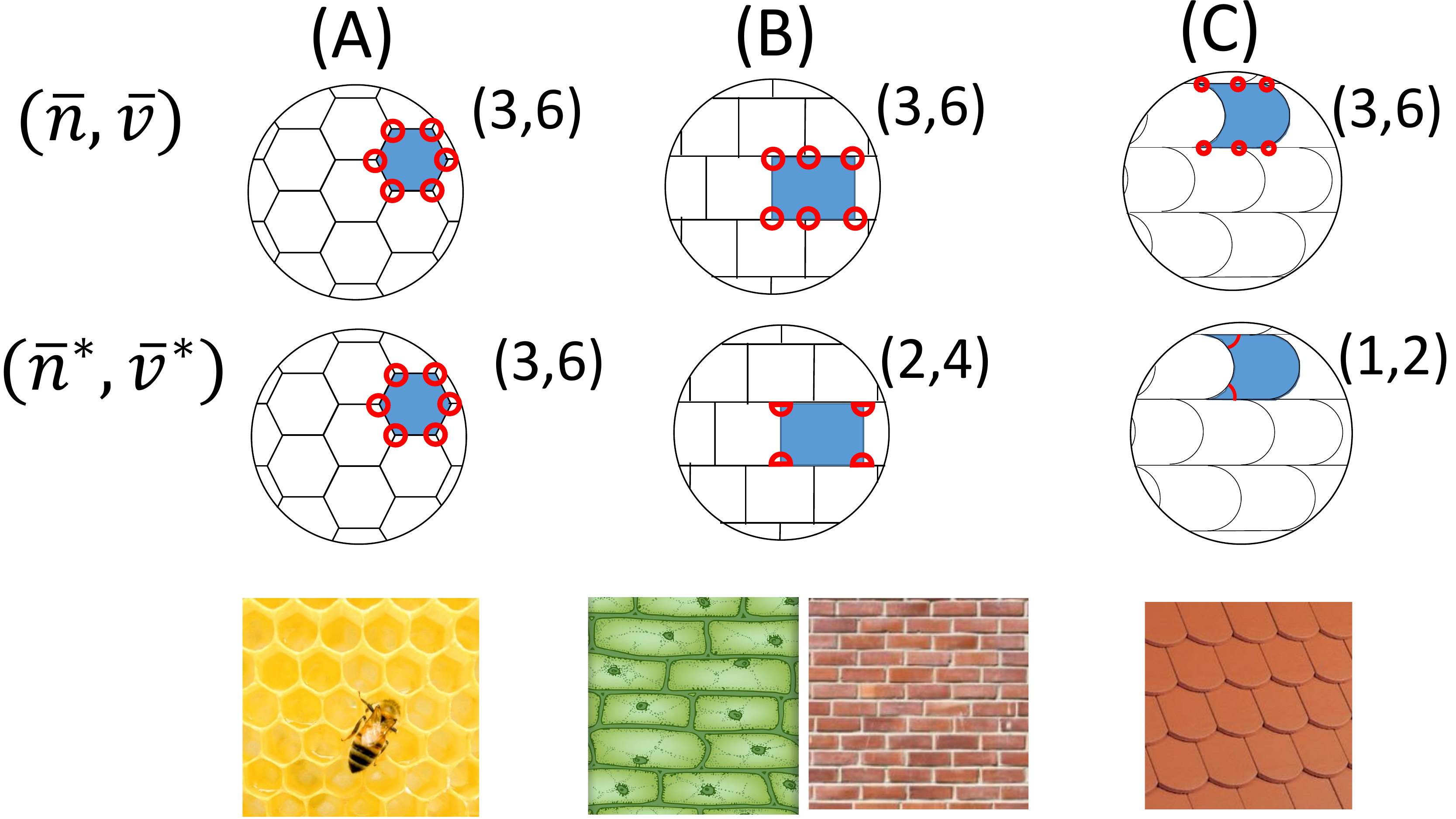}
\caption{Combinatorial and corner degrees in periodic patterns. (A) Hexagonal pattern reminiscent of honeycomb with $(\bar n, \bar v)=(\bar n^{\star}, \bar v^{\star})=(3,6)$. (B) Pattern reminiscent of plant cell tissue or brick wall, with combinatorial degrees $(\bar n, \bar v)=(3,6)$,
corner degrees $(\bar n^{\star}, \bar v^{\star})=(2,4)$. (C) Pattern reminiscent of roof tiles  with combinatorial degrees $(\bar n, \bar v)=(3,6)$,
corner degrees $(\bar n^{\star}, \bar v^{\star})=(1,2)$.}\label{fig:0}
\end{center}
\end{figure}
Despite having identical combinatorial degrees, the three patterns look radically different. One way to capture these
differences is to compute \emph{corner degrees}: the \emph{nodal corner degree} $\bar n ^{\star}$ and the \emph{cell corner degree} $\bar v^{\star}$
count only vertices where the angle of the half-tangents of the meeting arcs is not 180 degrees (and so, obviously, we have $\bar n ^{\star}\leq \bar n$,
$\bar v ^{\star}\leq \bar v$). In case of the hexagonal honeycomb in column (A) of Fig. \ref{fig:0}, the combinatorial and corner degrees coincide since none of the  examined angles is 180 degrees. In case of the brick wall pattern the corner degrees are smaller because we do have vertices with edge-angles 180 degrees. The roof tile pattern has even smaller corner degrees.

While it is clear that corner degrees may be smaller than combinatorial degrees, it is not obvious by how much these quantities may differ. In particular, it is not clear what is the minimum for the cell corner degree which counts the average number \emph{actual, non-smooth} corners of a cell. Our goal is to prove

\begin{thm}\label{main}
Let $\mathcal{M}$ be a smooth, 2D manifold with Euler characteristic
$\chi$ and let $M$ be a { {normal tiling}}
of $\mathcal{M}$ with $E$ edges
and corner degrees $\bar n^{\star}, \bar v^{\star}$. Then
we have
$$
\bar v^{\star} \geq 2-\frac{2\chi\bar n}{E(\bar n-2)+\chi\bar n}.
$$
\end{thm}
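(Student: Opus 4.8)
My plan is to reduce the stated bound to a purely local statement about the angles meeting at a single node, and then sum. First I would record the two standard incidence identities for a normal tiling: counting edge--node incidences gives $\bar n V = 2E$, and counting edge--cell incidences gives $\bar v F = 2E$, where $V$ and $F$ denote the numbers of nodes and cells. Substituting these into Euler's relation $V - E + F = \chi$ and clearing denominators yields
\[
E(\bar n - 2) + \chi\bar n \;=\; \frac{2E\bar n}{\bar v} \;=\; \bar n F .
\]
Hence the fraction on the right-hand side of the theorem collapses, and the claimed inequality is equivalent to the much cleaner statement $\bar v^\star \geq 2 - 2\chi/F$. I expect this algebraic simplification to be the first step, since it strips away the combinatorial parameters $\bar n, \bar n^\star$ and exposes what must really be proved.

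Next I would translate this into a counting inequality. Writing $A^\star$ for the total number of non-smooth corner incidences (the sum over all cells of the number of genuine corners of each cell), we have $A^\star = \bar v^\star F$ by definition, and also $A^\star = \sum_{p} a^\star(p)$, where $a^\star(p)$ is the number of cells having a non-smooth corner at the node $p$, because every corner incidence is attached to exactly one node and one cell. Since $(2 - 2\chi/F)\,F = 2F - 2\chi = 2(E-V)$ by $\chi = V-E+F$, and since $\sum_{p}(\deg p - 2) = 2E - 2V$, the target reduces to the local inequality
\[
a^\star(p) \;\geq\; \deg(p) - 2 \qquad \text{at every node } p,
\]
after which summing over all nodes finishes the argument.

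The geometric heart is this local inequality, and it is where I would spend the most care. Around a node $p$ the manifold is smooth, so the half-tangent directions of the incident edges live in the tangent plane, and the interior sector angles of the $\deg(p)$ cells meeting at $p$ sum to exactly $2\pi$; a corner is smooth precisely when its sector angle equals $\pi$. If three or more of these sectors were straight, their angles alone would already sum to at least $3\pi > 2\pi$, which is impossible, so at most two corners at $p$ can be smooth, giving $a^\star(p) \geq \deg(p) - 2$. (In fact, whenever $p$ carries at least three distinct edge-directions one checks that at most one corner can be straight, so the bound is usually far from tight; only straight-through degree-two nodes realise equality.) The main obstacle is therefore not the inequality itself but setting it up rigorously: one must guarantee that the half-tangents exist and that the $2\pi$ angle sum holds, which is exactly what the piecewise-$C^1$ and smooth-manifold hypotheses provide, and one must handle the bookkeeping for infinite tilings, where $V,E,F$ are infinite and the identities above hold only in the averaged sense secured by the balanced assumption of Remark \ref{rem1}; for periodic planar patterns this is cleanest on the torus quotient, where $\chi = 0$ and the bound specialises to $\bar v^\star \geq 2$.
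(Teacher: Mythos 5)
Your proof is correct and, once unwound, is essentially the paper's own argument: the stated bound reduces via Euler's formula and the incidence identities to $\sum_j v_j^{\star} \geq 2E-2V$, which both you and the paper obtain from the fact that at most two of the sector angles at any node can equal $\pi$, since they sum to $2\pi$. The only substantive difference is presentational: the paper phrases the count as a \emph{deregularization} of a regular tiling and encodes the key local fact as the stipulation $r_i \in \{0,1,2\}$ in Definition \ref{def:geom_node}, whereas you derive that fact explicitly from the $2\pi$ angle sum --- a worthwhile addition, since the paper never separately justifies the stipulation --- and your observation that $E(\bar n-2)+\chi\bar n=\bar n F$, so that the bound is just $\bar v^{\star}\geq 2-2\chi/F$, makes the algebra more transparent than the paper's substitution of (\ref{p1}) into (\ref{iaverage1}).
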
 \noindent As we can see, both in the $E \to \infty$ limit and for $\chi =0$ Theorem \ref{main} yields
\begin{equation}\label{cor1}
\bar v^{\star} \geq 2.
\end{equation}
\begin{rem}\label{periodic}
As periodic patterns can be treated as infinite, equation (\ref{cor1}) holds for such patterns.
\end{rem}

\subsection{Structure of paper}
In Section \ref{2D} we first define the basic concepts and prove Theorem \ref{main}. In Section \ref{3D}  we offer an alternative proof for the 2D monohedral case as well as a construction of a 3D monohedral tiling with no vertices.

\section{Normal tiling of 2D manifolds}\label{2D}

\subsection{Combinatorial properties}

\begin{defn}\label{defav}
Let $M$ be a normal \cite{Grunbaum-Shepard} tiling
of a smooth, $d=2$-dimensional manifold $\mathcal{M}$ and let $M$ have $V$ vertices (0-dimensional cell or nodes), $E$ edges (1-dimensional cells) and $F$ faces (2-dimensional cells or simply cells). We call the number of edges meeting at the $i$th node ($i=1,2, \dots V$) the \emph{combinatorial nodal degree}  and denote it by $n_i$. We call the number of edges  on the boundary of  the $j$th  cell ($j=1,2, \dots F$)
the \emph{combinatorial cell degree}  and denote it by $v_j$. For finite $V,E,F$ we denote the average values of the combinatorial nodal and cell degrees by $\bar n, \bar v$, respectively.
We also allow that $V,E,F$ will be infinite, however, in that case we also assume that $M$ is \emph{balanced} \cite{Grunbaum-Shepard}, guaranteeing that a suitably chosen limit process on a sequence of finite tilings provides the corresponding averages $\bar n, \bar v$ for the nodal and cell degrees (for details see Remark \ref{rem1}). We call the $[\bar n, \bar v]$ plane as the (combinatorial) \emph{symbolic plane}. 
\end{defn}

\begin{rem}\label{rem1}
On compact manifolds and for finite tilings, counting averages is trivial. On infinite domains
averages are again trivial if $M$ is periodic. For general normal tilings
we compute the averages by counting the nodal and cell degrees in a finite ball $B(X, \rho)$ with center $X$ radius $\rho$
to obtain $\bar n(X, \rho), \bar v(X,\rho)$ associated with the finite normal tiling $M(X,\rho)$. Then
we let the radius $\rho$ approach infinity. We say that the averages $\bar n, \bar v$ of the degrees exists if 
the limits $\lim_{\rho \to \infty} \bar n(X,\rho),\lim_{\rho \to \infty} \bar v(X,\rho)$ exist and are independent of $X$.
Henceforth we will assume that the manifold $\mathcal{M}$ is either compact and $M$ is finite, or, $M$ is infinite and
the averages converge in the sense described above and boundary effects decay.
\end{rem}

\begin{rem}\label{rembalance}
Note that on the Euclidean plane the concepts of  a tiling being "normal" and "balanced" are well-investigated (see Section 3.3 in \cite{Grunbaum-Shepard}). Clearly,  the definition of "normality" formally can be applied on arbitrary manifolds, but its precise connection with the property "balanced" is far from obvious, except on the Euclidean plane (see Theorem 3.3.2 of \cite{Grunbaum-Shepard}. In this paper we rely on the strong conditions on averaging, given in Remark \ref{rem1}).
\end{rem}
\begin{rem}\label{rem2} From the fact that the tiling $M$ is normal, it follows that each 2-cell is \emph{contractible}, and the boundary of a 2-cell is a simple closed curve.  We will later discuss $d$-dimensional manifolds and their tilings where we also use that the $d$-dimensional cells are contractible and all $k<d$ dimensional faces are also contractible.
\end{rem}

\begin{rem}\label{rem3} Since all edges are assumed to be $C^1$-smooth,
they have  half-tangents at both   endpoints. We also note that the angle of the two half-tangent{ {s}} can be calculated from the globally defined differential structure of the smooth manifold.
\end{rem}

\begin{defn}\label{defhar}
We call $$\bar h = \frac{\bar v\bar n }{\bar v + \bar n}$$ the \emph{harmonic degree} of $M$.
\end{defn}
For finite $M$, the Euler formula can be written as
\begin{equation}\label{e1}
F - E + V = \chi.
\end{equation}
According to Assumption (1) in Definition \ref{defav}, and the fact that every edge has two vertices and belongs to two cells, the combinatorial nodal and cell
degrees can be expressed as
\begin{equation}\label{ave}
\bar n = \frac{2E}{V}, \quad \bar v = \frac{2E}{F}.
\end{equation}
We can rewrite equation (\ref{e1}) as
\begin{equation}\label{e2}
\frac{F}{2E}+\frac{V}{2E}= \frac{E+\chi }{2E},
\end{equation} which, using equation (\ref{ave}) and Definition \ref{defhar} translates into
\begin{equation}\label{meuler4}
\bar h =\frac{2E}{E+\chi}.
\end{equation}
In the infinite case { {we regard an averaging process analogous to the one given in Remark \ref{rem1}
where the number $E(\rho)$ of edges (and the harmonic degree $\bar h(\rho)$) are defined as the function of the radius $\rho$ of a finite ball. As $\rho \to \infty$, we have
\begin{equation}\label{meuler5}
\bar h :=\lim _{E(\rho) \to \infty} \bar h(\rho) =2,
\end{equation}
regardless of the Euler characteristic of $M=\lim _ {\rho \to \infty} M(\rho)$.
Also, if $\chi=0$ then we have $\bar h =2$,  one example is the torus $T^2$.}}

\begin{rem}\label{rem4}
Observe, that the finiteness of the normal tiling $M$ is equivalent to the compactness of the carrying manifold $\mathcal{M}$.
\end{rem}

\begin{figure}[ht]
\begin{center}
\includegraphics[width=\textwidth]{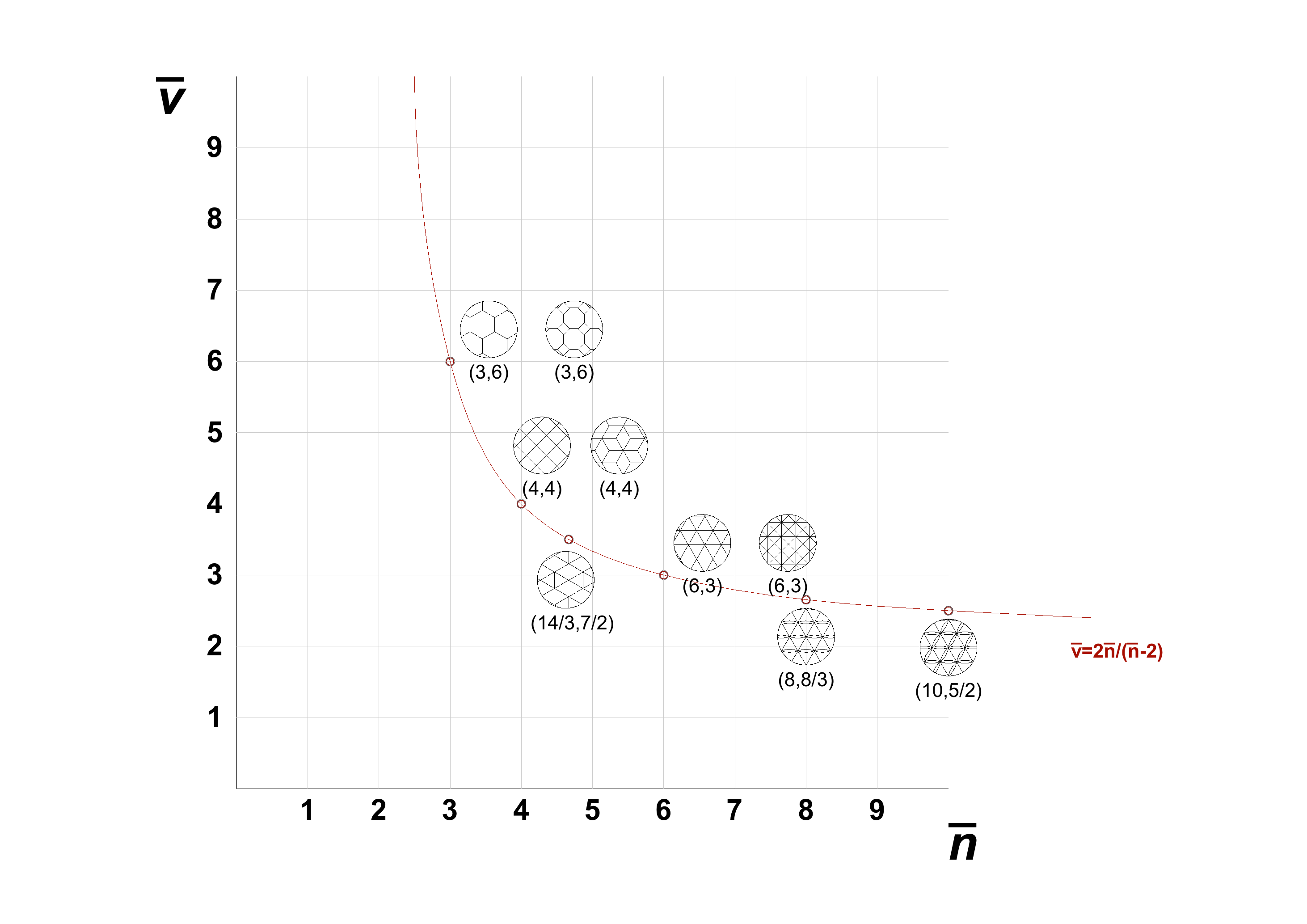}
\caption{Illustration of infinite regular patterns on the symbolic plane $[\bar n, \bar v]$.
All patterns lie on the $\bar h = 2$ hyperbola.
 }\label{fig:CW0}
\end{center}
\end{figure}

\subsection{Geometric properties: sharp corners}

\begin{defn}\label{def:geom_node}
Let $M$ be a normal and balanced tiling
of a smooth, $d=2$-dimensional manifold $\mathcal{M}$ and let $M$ have $V$ vertices (nodes), $E$ edges and $F$ faces (cells) and let $n_i$ denote the combinatorial degree of the $i$th node, $(i=1,2, \dots V)$. We number the edges $e_{i,j}$  ($i=1,2 \dots V, j=1,2, \dots n_i$) meeting at the $i$th node clockwise consecutively using their second subscript, so the \emph{pair} $e_{i,j},e_{i,j+1}$ is not separated by any other edge.
We also regard $e_{i,n_i},e_{i,1}$ as one pair, so we associate $n_i$ pairs with the $i$th node.  We call a pair  \emph{degenerate} if the two edges meet at 180 degrees, otherwise we call the pair \emph{generic}. (The angle between two the edges of an edge-pair is identical to the  alternate interior angle their respective half-tangents at the common vertex, thus we can distinguish between the angle zero (when the two half-tangents have opposite direction on their line) and the angle $\pi$ ( when the two half-tangents have the same direction).) We associate with the $i$th node the quantity $r_i \in \{0,1,2\}$, counting the number of degenerate pairs.
We define the \emph{corner degree} of the $i$th node as  $$n^{\star}_i=n_i-r_i$$.
\end{defn}

\begin{defn}\label{def:geom_cell}
Let $M$ be a normal and balanced tiling
of a smooth, $d=2$-dimensional manifold $\mathcal{M}$  and let $M$ have $V$ vertices (nodes), $E$ edges and $F$ faces (cells) and let $v_i$ denote the combinatorial degree of the $i$th cell, $(i=1,2, \dots F)$.
The edges $e_{i,j}$  ($i=1,2 \dots F, j=1,2, \dots n_i$) surrounding the $i$th cell are numbered clockwise consecutively, so the \emph{pair} $e_{i,j},e_{i,j+1}$ overlaps at a vertex.
We also regard $e_{i,v_i},e_{i,1}$ as one pair, so we associate  $v_i$ pairs with the $i$th cell. We call a pair  \emph{degenerate} if the two edges meet at 180 degrees, otherwise we call the pair \emph{generic}. We associate with the $i$th cell the quantity $q_i \in \{0,1,2, \dots v_i \}$, counting the number of degenerate pairs.
We define the \emph{corner degree} of the $i$th cell as  $$v^{\star}_i=v_i-q_i$$.
\end{defn}

\begin{defn} We call the $[\bar n^{\star},\bar v^{\star}]$ plane the (geometric) \emph{symbolic plane.} \end{defn}

\begin{defn}\label{def:rho}
Let
$$
\rho(M)= 1-\frac{\sum_{i=1}^{V} r_i}{2V}
$$
and we call $M$ $\rho$-regular.
In particular,  we call $M$ regular, semi-regular or irregular  if $\rho(M)=1, 1/2, 0$, respectively.
\end{defn}

\subsection{Trivial lower bounds on the cell and nodal degrees for large cell decompositions}
Here we regard the $E \to \infty$ limit of  a balanced normal tiling .
If $M$ is regular ($\rho =1$) then we have
\begin{equation}\label{regular}
\bar v = \bar v^{\star}, \quad
\bar n = \bar n^{\star}.
\end{equation}
Equation (\ref{meuler5}) implies
\begin{equation}
\bar n, \bar v \geq 2,
\end{equation} which, via (\ref{regular}) is equivalent to
\begin{equation}
\bar n^{\star}, \bar v^{\star} \geq 2.
\end{equation}
If, in addition to being regular, $M$  is also {  { combinatorially equivalent to a}} convex mosaic then we have
\begin{equation}
\bar n^{\star}, \bar v^{\star} \geq 3.
\end{equation}
All the listed bounds are trivial. Also, because of the inverse functional relationship (\ref{meuler5})
between $\bar n$ and $\bar v$,  the \emph{lower} bound of one of the combinatorial averages
is tied to the \emph{upper} bound of the other combinatorial average.

If $M$ is not regular $(\rho < 1)$ then there is no general functional relationship between $\bar n$ and $\bar v$.
Based on Definitions \ref{def:geom_node} and \ref{def:geom_cell} for non-regular tilings we have
\begin{equation}\label{non-regular}
\bar v  > \bar v^{\star}, \quad
\bar n >  \bar n^{\star}
\end{equation}
and we can also see that the corner degrees attain their respective minima if
$M$ is irregular, i.e. we have $\rho =0$.
Our next goal is to find the lower bound for $\bar v^{\star}$ for irregular normal tilings.

\subsection{Proof of Theorem \ref{main}}
\begin{proof}
Let $M$ be a regular normal tiling  with $F$ faces (cells), $V$ vertices (nodes) and $E$ edges and respective cell and nodal combinatorial degrees given in equation (\ref{ave}).
Let us \emph{deregularize} $M$ by transforming generic edge-pairs into degenerate ones. We perform this by local, continuous transformation in the vicinity of the nodes. Assume that by such local transformations we create $2V(1-\rho)$ degenerate pairs to obtain
the $\rho$-regular tiling $M(\rho)$. While the combinatorial degrees remain constant,
the corner degrees can be computed as a function of $\rho$:
\begin{equation}\label{iaverage}
\bar v^{\star}(\rho)= \frac{2E-2(1-\rho)V}{F}=\bar v \left(\frac{\bar n-2(1-\rho) }{\bar n}\right),
\end{equation}
$$
 \bar n^{\star}(\rho)=\frac{2E-2(1-\rho)V}{V}=\bar n-2(1-\rho).
$$
We can immediately see that
\begin{equation}\label{vn0}
\frac{\bar v}{\bar n}=\frac{\bar v^{\star}(\rho)}{\bar n^{\star}(\rho)},
\end{equation}
so the evolution path of topological deregularization on the $[\bar n, \bar v]$ symbolic plane
is a straight line passing through the origin. By subsitution $\rho=0$ into (\ref{iaverage}) we obtain
\begin{equation}\label{iaverage0}
 \bar n^{\star} \geq \frac{2E-2V}{V}=\bar n-2,
\end{equation}
\begin{equation}\label{iaverage1}
\bar v^{\star} \geq \frac{2E-2V}{F}=\bar v \left(\frac{\bar n-2 }{\bar n}\right).
 \end{equation}
Equation (\ref{meuler4}) is equivalent to
\begin{equation}\label{p1}
\bar v = \frac{2E\bar n}{\bar n(E+\chi)-2E}.
\end{equation}
Substituting (\ref{p1}) into (\ref{iaverage1}) yields the formula of Theorem \ref{main}.
\end{proof}

\begin{figure}[ht]
\begin{center}
\includegraphics[width=\textwidth]{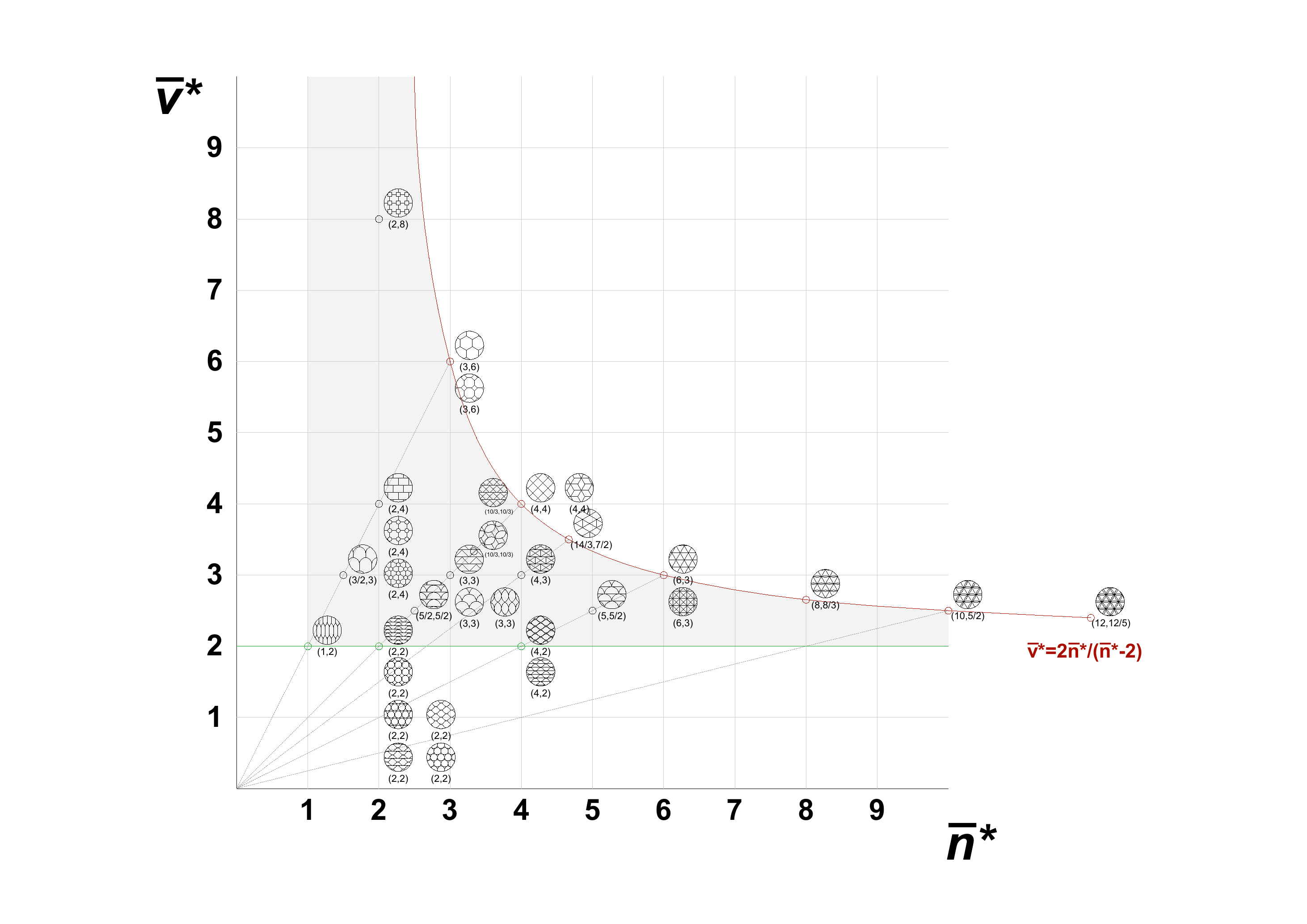}
\caption{Illustration of infinite patterns on the symbolic plane $[\bar n ^{\star}, \bar v^{\star}]$.
Observe that combinatorially equivalent tilings are connected by rays passing through the origin, illustrating formula (\ref{vn0}).
 }\label{fig:CW1}
\end{center}
\end{figure}

\section{Related issues and discussion}\label{3D}

\subsection{The monohedral case in $d=2$ dimensions}\label{ss:monohedral}
Here we discuss the special case where the tiling {{of the Euclidean plane}} consists of congruent  cells $\mathcal{C}$. Our next argument requires that at every smooth point of the boundary of $\mathcal{C}$ the signed curvature $\kappa$ exists. To guarantee this property we assume here that the smooth part of the boundary is finite union of two-times continuously differentiable arcs and for brevity, we call such tilings $C^2$-tilings. Theorem \ref{main} implies
\begin{cor}\label{cor1}
In case of monohedral, normal $C^2$-tilings {{of the Euclidean plane}}, for the corner degree of the cell we have $v ^{\star}\geq 2$.
\end{cor}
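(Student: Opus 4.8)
The plan is to obtain the inequality directly from Theorem \ref{main}, exploiting the special feature of the monohedral setting: I expect that the \emph{averaged} quantity $\bar v^\star$ appearing in Theorem \ref{main} in fact coincides with the corner degree $v^\star$ of the single prototile $\mathcal{C}$. If that identification holds, then the limiting bound $\bar v^\star\geq 2$ of Theorem \ref{main} (the inequality (\ref{cor1}), valid as $E\to\infty$) transfers verbatim to $v^\star$. Since a tiling of the Euclidean plane is infinite and, by the standing assumption of the paper, balanced, Theorem \ref{main} is applicable in its $E\to\infty$ branch, and the whole argument reduces to establishing $\bar v^\star=v^\star$.

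The heart of the proof is therefore to show that the cell corner degree is a congruence invariant of $\mathcal{C}$, equal for every cell. First I would argue, using Remark \ref{rem3}, that every edge is $C^1$-smooth, so no non-smooth boundary point of a cell can lie in the relative interior of an edge; hence each genuine (non-smooth) corner of a cell is forced to be a node of $M$ at which two edges of that cell form a \emph{generic} pair in the sense of Definition \ref{def:geom_cell}. Conversely, any node that the tiling places on a $C^1$-smooth arc of $\partial\mathcal{C}$ (for instance one induced there by a corner of a neighbouring cell) produces a pair meeting at $180^\circ$, i.e. a \emph{degenerate} pair, which is subtracted in $v^\star_i=v_i-q_i$. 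Consequently $v^\star_i$ equals exactly the number of points at which the boundary curve of the $i$th cell fails to be $C^1$. As all cells are congruent copies of $\mathcal{C}$ and the boundary angle at each non-smooth point is preserved under congruence (an angle $\neq\pi$ stays $\neq\pi$), this number is the same for every cell and equals $v^\star$. Averaging a constant over any finite $M(X,\rho)$ returns that constant, so the limit of Remark \ref{rem1} gives $\bar v^\star=v^\star$ irrespective of how the combinatorial degrees $\bar n,\bar v$ behave.

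The main obstacle I anticipate is precisely the bookkeeping in this middle step: one must be certain that a genuine corner of $\mathcal{C}$ can never be smoothed out, absorbed, or merged into a single edge by the surrounding cells, and that every \emph{extra} node forced onto a smooth arc of $\partial\mathcal{C}$ is genuinely degenerate. Both points rest on the $C^1$-edge hypothesis of Remark \ref{rem3} together with congruence-invariance of the boundary angles, and they are what make the reduction legitimate. Once they are secured, the delicate convergence issues of Remark \ref{rem1} never enter, since only the cell average of the constant $v^\star$ is required; combining $\bar v^\star=v^\star$ with $\bar v^\star\geq 2$ from Theorem \ref{main} then yields $v^\star\geq 2$.
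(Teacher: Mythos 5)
Your proof is correct, but it is not the proof the paper gives: the paper explicitly states that Theorem \ref{main} implies the Corollary and then deliberately supplies a proof that is \emph{independent} of Theorem \ref{main}, based on the turning-angle (Gauss--Bonnet) identity $\int_{I_S}\kappa\,ds+\sum\Delta\alpha_i=2\pi$ for the prototile boundary. There the key step is a pairing argument: for each smooth arc $\gamma$ of $\partial\mathcal{C}$ one uses the congruence carrying the neighbouring cell onto $\mathcal{C}$ to match $\gamma$ with a congruent arc traversed with opposite orientation, so the curvature integral over the smooth part vanishes, the exterior angles must sum to $2\pi$, and since each $\Delta\alpha_i\le\pi$ at least two corners are needed. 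Your route instead makes rigorous the implication the paper only asserts: you observe that, because edges are $C^1$, the quantity $v^\star_i=v_i-q_i$ counts exactly the non-smooth points of $\partial(\text{cell }i)$, a congruence invariant, so $v^\star_i$ is the same constant $v^\star$ for every cell, whence $\bar v^\star=v^\star$ and the $E\to\infty$ bound $\bar v^\star\ge 2$ of Theorem \ref{main} transfers to the prototile. This is a legitimate and clean argument; it even avoids the $C^2$ hypothesis entirely. What it costs is dependence on Theorem \ref{main} and hence on the paper's standing balancedness/averaging assumptions (existence of $\bar n,\bar v$ in the sense of Remark \ref{rem1}) and on the deregularization argument used to prove that theorem; the paper's curvature proof is local, self-contained, needs no averaging hypotheses at all, and yields the stronger conclusion that the exterior angles at the corners of $\mathcal{C}$ sum to exactly $2\pi$ (which is what powers Remark \ref{rem5}). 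If you keep your version, state explicitly that you are invoking the balancedness assumption, since the Corollary's wording does not repeat it.
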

Below we give a proof for Corollary \ref{cor1} which is independent of the proof of Theorem \ref{main}.
\begin{proof}
Let us parametrize the circumference of the cell $\mathcal{C}$ by the arclength $s$ on the unit interval $I\equiv [0,1]$, so we have  $s \in [0,1]$. The boundary of the cell $\mathcal{C}$ is piecewise $C^2$-smooth. We will consider $I$ as the union of the (open) set of smooth points $I_S$ and the (discrete) set of combinatorial
nodes $N_i$, $i=1,2,\dots v$:
\begin{equation}\label{union}
I=I_S \bigcup \{N_i\}.
\end{equation}
 We denote the angle of the tangent by $\alpha(s)$ and at the nodes we will denote the finite non-negative angles
by $\Delta \alpha _i, (i=1,2,\dots v)$. We consider every combinatorial node to be a non-smooth point of the boundary of $C$.
The integral $K$ of the (signed) scalar curvature $\kappa$ can be written as
\begin{equation}\label{int01}
    K=\int_{I_S}\kappa(s) ds + \sum_{i=1}^{v}\Delta \alpha _i=2\pi.
\end{equation}
We observe that the discrete angle increments $\Delta \alpha_i$ differ from zero \emph{only} at corners. To simplify our computation, we re-assign indices $i \in [1,2, \dots v^{\star}]$ to these non-zero relative angles and so, instead of (\ref{int01}) we may write:
\begin{equation}\label{int1}
    K=\int_{I_S}\kappa(s) ds + \sum_{i=1}^{v^{\star}}\Delta \alpha _i=2\pi.
\end{equation} Since the tiling is monohedral, the neighbor cells are congruent to the investigated cell $C$.
Now we observe that for every smooth part $\gamma $ of the boundary there is at least one orientation preserving congruence $f_{\gamma}: \mathcal{M}\to \mathcal{M}$ which sends the cell  $C_\gamma$, overlapping with $C$ along the arc $\gamma$, to $C$. Let $\gamma '$ be the image of $\gamma $ by $f_\gamma$. If $\gamma '=\gamma$ then the value of the integral (\ref{int1}) on $\gamma$ is zero.  If this is not the case then the value of the integral on the union $\gamma \cup \gamma '$ is zero. Since $f_\gamma (C)$ is the neighbour of $C$ along the arc $\gamma '$, if there exists another smooth part $\gamma ^\star$ for which $f_{\gamma^\star}(\gamma^\star)=\gamma'$ then we have $(f_\gamma^{-1}\circ f_{\gamma^\star})(C)=C$ and so $f_\gamma^{-1}\circ f_{\gamma^\star}$ is an orientation preserving symmetry transformation of $C$. Thus $\gamma '':=f_\gamma^{-1}\circ f_{\gamma^\star}(\gamma ')$ is congruent to $\gamma'$. If $\gamma''\ne\gamma'$  then on the union of $\gamma$, $\gamma^\star$, $\gamma'$ and $\gamma ''$ the value of the integral is zero. Let us regard the arc $\gamma '$ with endpoints $P$ and $Q$.  If $\gamma ''=\gamma '$ then we have two cases: either the endpoints $P$, $Q$ of the arc $\gamma'$ remain fixed or they are interchanged. In both cases, the orientation preserving congruence $f_\gamma^{-1}\circ f_{\gamma^\star}$ fixes the segment $PQ$ and either $f_\gamma^{-1}\circ f_{\gamma^\star}$ is the identity (which is a contradiction) or it is a reflection at the center of $PQ$. This latter possibility means that the value of the integral is zero in $\gamma'$ and also in $\gamma$. It is easy to see by an inductive argument that on the whole smooth part the integral is zero and so 

\begin{equation}\label{int2}
    K=\int_{I_S}\kappa(s) ds + \sum_{i=1}^{v^{\star}}\Delta \alpha _i={ {\sum_{i=1}^{v^{\star}}\Delta \alpha _i=}}2\pi.
\end{equation} As $\Delta \alpha _i \leq \pi$, this implies that for the number $v^{\star}$ of  corners in $C$  we have $v^{\star}\geq 2$ and thus it proves the Corollary.
\end{proof}
\begin{rem}\label{rem5}
It is easy to see that the proof of Corollary \ref{cor1} also implies that for the normal, periodic tilings of the Euclidean plane we have $\bar v^{\star}\geq 2.$
\end{rem}

\subsection{Construction of a monohedral tiling with no vertices in 3 dimensions}
In 3 dimensions the deregularization algorithm, presented in the proof of Theorem \ref{main}, would work in an analogous manner as in 2 dimensions, since in any dimension $d$, at any point, the number of
hypercells (of dimension $d-1$) tangent to each other is exactly 2. This algorithm
reduces the nodal degree by 2, and thus we expect  (\ref{iaverage1}) to remain valid.
However, unlike in 2 dimensions, in the 3D case this equation will not provide the actual lower bound on $\bar v^{\star}$:
we have to consider that in 3 dimensions one can use a different deregularization algorithm as well, where 
not vertices but edges are being smoothed. The general formulae for the second algorithm appear to be less trivial 
as smoothing of edges is not a local operation.
Nevertheless, it is not difficult to establish a sharp lower bound in 3 dimensions:
\begin{figure}[h!]
\begin{center}
\includegraphics[width=0.8\textwidth]{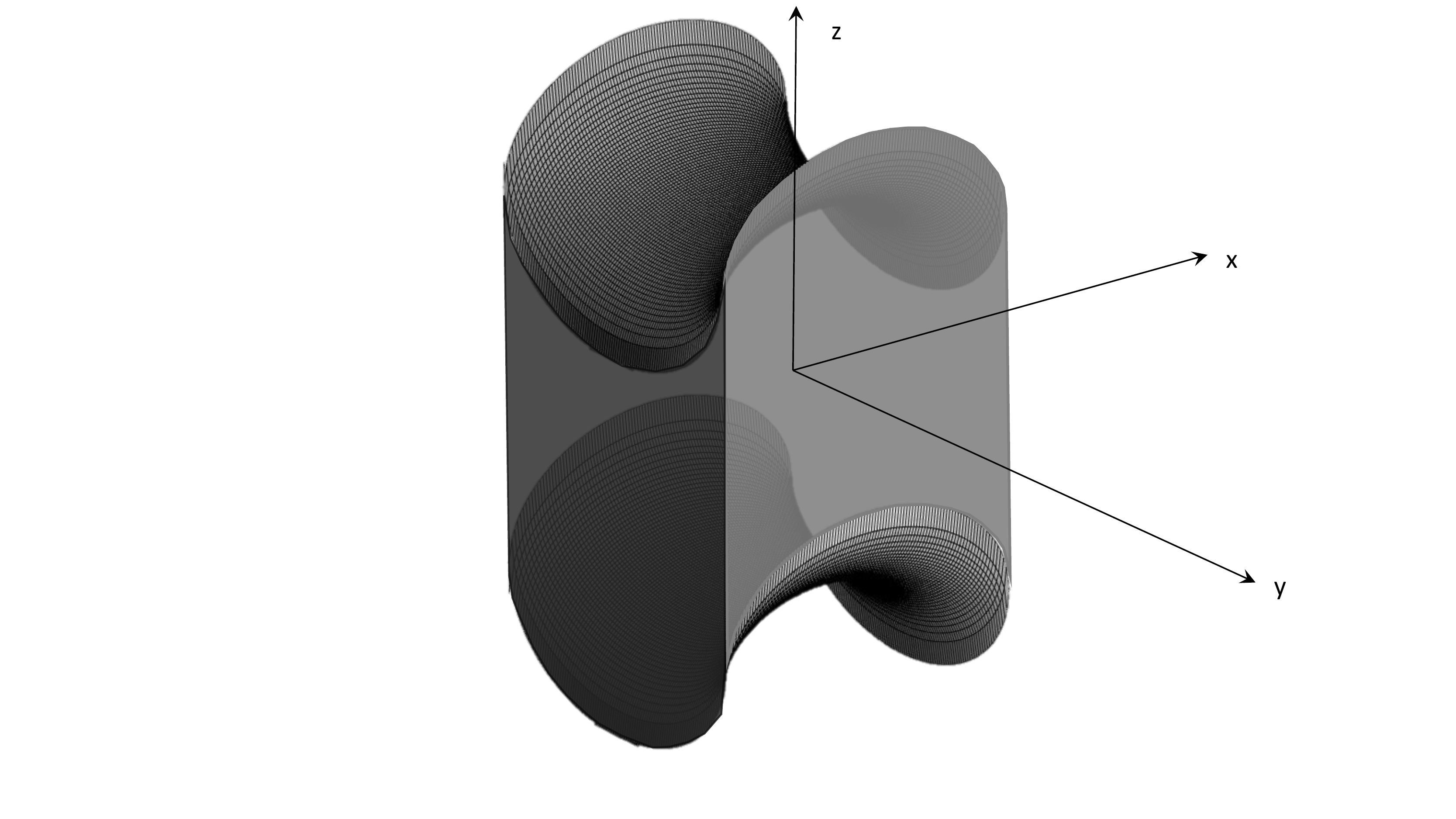}
\caption{{{Construction of a $v^{\star}=0$ monohedral tiling in $d=3$ dimensions. The rectangular prism has
its vertical edges at $(x,y)=\pm 1$ and the saddle-like surface at the top and the bottom are given as
$z=\sqrt{1-x^2}-\sqrt{1-y^2} \pm 1$.}}}\label{fig:00}
\end{center}
\end{figure}
\begin{lem}\label{3dmono}
In $d=3$ dimensions
$\bar v^{\star} \geq 0.$ 
\end{lem}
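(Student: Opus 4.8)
The plan is to prove the lower bound $\bar v^{\star}\ge 0$ directly, treating it as the genuine (and sharp) floor on the cell corner degree in three dimensions, while keeping it strictly separate from the question of \emph{attainability}, which is settled elsewhere by the construction of Figure \ref{fig:00} and is not part of this statement. The substance lies entirely in transporting the two-dimensional bookkeeping of Definitions \ref{def:geom_node}--\ref{def:geom_cell} to $d=3$; once that is in place, the inequality is immediate.

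First I would fix the three-dimensional notion of a sharp vertex. In a normal tiling $M$ of the smooth $3$-manifold $\mathcal{M}$, I call a node (a $0$-cell) \emph{smooth} with respect to an incident $3$-cell $C$ when the bounding surface $\partial C$ passes through that node $C^1$-smoothly, i.e.\ admits a well-defined tangent plane there -- the direct analogue of two edges meeting at $180$ degrees in the planar case -- and \emph{sharp} otherwise. For the cell $C_i$ I then let $v_i$ be the total number of its boundary nodes and $q_i$ the number of those that are smooth for $C_i$, and I set $v^{\star}_i=v_i-q_i$, exactly parallel to Definition \ref{def:geom_cell}. Since the smooth nodes of $C_i$ form a subset of all its nodes, we have $0\le q_i\le v_i$, whence $v^{\star}_i\ge 0$ for every cell.

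Next I would pass to the average $\bar v^{\star}$ through the balanced limiting process of Remark \ref{rem1}, applied now to three-dimensional balls $B(X,\rho)\subset\mathcal{M}$: for each finite radius the truncated average $\bar v^{\star}(X,\rho)$ is a sum of the nonnegative integers $v^{\star}_i$ over the cells met, divided by their positive count, so $\bar v^{\star}(X,\rho)\ge 0$. Under the balancedness hypothesis the limit exists and inherits the inequality, giving
\[
\bar v^{\star}=\lim_{\rho\to\infty}\bar v^{\star}(X,\rho)\ge 0,
\]
which is the assertion of the lemma.

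The main obstacle is conceptual rather than computational. Unlike the planar case of Definitions \ref{def:geom_node}--\ref{def:geom_cell}, where the link of a node is a single angle, in $d=3$ the link of a node on $\partial C$ is a spherical polygon, and one must decide precisely when such a configuration is to be read as $C^1$-smooth; the natural choice is to require a well-defined tangent plane of $\partial C$ at the node, and I would verify that this makes each $v^{\star}_i$ a well-defined nonnegative integer and that the averaging of Remark \ref{rem1} still yields a finite limit in three dimensions. I stress that no Euler-type identity or deregularization argument is needed for the bound itself: nonnegativity of a count is all that is used, which is exactly why the floor drops from the planar value $2$ of Theorem \ref{main} to $0$. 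That this floor is actually reached -- so that the two-vertex phenomenon genuinely fails in three dimensions -- is the complementary statement, realized by the explicit monohedral tiling of Figure \ref{fig:00}, and lies outside the present lemma.
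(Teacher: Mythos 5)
Your proof of the literal inequality is correct, but it is the mirror image of the paper's proof rather than the same argument. The paper's entire proof consists of exhibiting the monohedral tiling of Figure \ref{fig:00} --- a rectangular prism whose top and bottom are replaced by the congruent saddle surfaces $z=\sqrt{1-x^2}-\sqrt{1-y^2}\pm 1$, so that translates fill space with every boundary point smooth and hence $v^{\star}=0$. In other words, the authors treat the floor $\bar v^{\star}\geq 0$ as self-evident (precisely the triviality you formalize: $v^{\star}_i=v_i-q_i\geq 0$ termwise, preserved under the averaging of Remark \ref{rem1}) and regard the \emph{attainability} of $0$ as the content of the lemma; the sentence preceding it announces ``a sharp lower bound in 3 dimensions,'' and the abstract advertises exactly this construction. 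So the two arguments are complementary halves of the claim ``the sharp lower bound in $d=3$ is $0$'': you supply the trivial floor, together with a genuinely useful piece of precision the paper skips --- a definition of a smooth node in $d=3$ via a well-defined tangent plane of $\partial C$, without which $v^{\star}_i$ is not even formally defined in three dimensions --- while the paper supplies the example showing the two-vertex phenomenon of Theorem \ref{main} actually fails in $d=3$. For the statement as literally printed, your argument suffices (one may even object that the paper's one-line proof, taken alone, establishes sharpness but not the inequality); but read as the authors intend, your deliberate exclusion of the construction omits the step the paper considers to be the whole proof, so you should either include the construction or note explicitly that the lemma as stated carries no content beyond nonnegativity.
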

\begin{proof} We prove Lemma \ref{3dmono} by showing a monohedral tiling in 3D with $v^{\star}=0$, illustrated in Figure \ref{fig:00}.  \end{proof}

\bibliographystyle{plain}
\pagebreak
\bibliography{tiling}

\begin{thebibliography}{10}

\bibitem{adler_fracture_network_book}
P.~M. Adler and J-F. Thovert.
\newblock {\em Fractures and Fracture Networks}.
\newblock Springer, Dordrecht, 1999.

\bibitem{arch1}
W.~Chang.
\newblock Application of tessellation in architectural geometry design.
\newblock {\em E3S Web of Conferences, ICEMEE 2018}, 38 (03015), 2018, DOI
  https://doi.org/10.1051/e3sconf/20183803015.

\bibitem{Plato}
G{\'a}bor Domokos, Douglas~J. Jerolmack, Ferenc Kun, and J{\'a}nos
  T{\"o}r{\"o}k.
\newblock Plato{\textquoteright}s cube and the natural geometry of
  fragmentation.
\newblock {\em Proceedings of the National Academy of Sciences},
  117(31):18178--18185, 2020.

\bibitem{goering_columnar_2008}
L.~Goehring and S.~W. Morris.
\newblock Scaling of columnar joints in basalt.
\newblock {\em Journal of Geophysical Research: Solid Earth}, 113(B10), 2008.

\bibitem{goering_evolv_pattern_2013}
Lucas Goehring.
\newblock Evolving fracture patterns: columnar joints, mud cracks and polygonal
  terrain.
\newblock {\em Philosophical Transactions of the Royal Society A: Mathematical,
  Physical and Engineering Sciences}, 371(2004):20120353, 2013.

\bibitem{map1}
C.~Gold.
\newblock Tessellations in gis: Part ii–making changes.
\newblock {\em Geo-spatial Information Science}, 19:157 -- 167, 2016.

\bibitem{Grunbaum-Shepard}
B.~Gr\"unbaum and G.C. Shepard.
\newblock {\em Tilings and Patterns}.
\newblock Freeman and Co., New York, 1987.

\bibitem{cell1}
E.~Heller and E.~Fuchs.
\newblock Tissue patterning and cellular mechanics.
\newblock {\em Journal of Cell Biology}, 211:219 -- 231, 2015.

\bibitem{Horne}
C.E. Horne.
\newblock {\em Geometric Symmetry in Patterns and Tilings}.
\newblock Woodhead Publishing, 2000.

\bibitem{urban1}
B.~Jiang.
\newblock A topological pattern of urban street networks: Universality and
  peculiarity.
\newblock {\em Physica A}, 384:647 -- 655, 2007.

\bibitem{Kazanci}
D.~Kazanci and A.~Vince.
\newblock A property of normal tilings.
\newblock {\em Am. Math. Monthly}, 111:813 -- 816, 2004.

\bibitem{cell2}
E.~Laruelle, N.~Spassky, and A.~Genovesio.
\newblock Unraveling spatial cellular pattern by computational tissue
  shuffling.
\newblock {\em Communication Biology}, 3, 2020.

\bibitem{Senechal}
D.~Schattschneider and M.~Senechal.
\newblock {\em Tilings. Chapter 3 in: Discrete and Computational Geometry}.
\newblock CRC Press, 2004.

\bibitem{steacy_automaton_1991}
S.~Steacy and C.~Sammis.
\newblock An automaton for fractal patterns of fragmentation.
\newblock {\em Nature}, 353:250--252, 1991.

\end{thebibliography}

\end{document}